\newcommand{\bigzero}{\mbox{\normalfont\Large\bfseries 0}}
\newcommand{\rvline}{\hspace*{-\arraycolsep}\vline\hspace*{-\arraycolsep}}
\newtheorem{theorem}{Theorem}[section]
\newtheorem{definition}[theorem]{Definition}
\newtheorem{example}[theorem]{Example}
\newtheorem{lemma} [theorem]{Lemma}
\newtheorem{observation}[theorem]{Observation}
\newtheorem{remark}[theorem]{Remark}
\begin{document}
\title{\bf Laplacian Spectra of Semigraphs}
\author{Pralhad M. Shinde \footnote{Department of Mathematics, College of Engineering Pune, Pune - 411005, India,
 \\ Email: pralhad.shinde96@gmail.com}} 
\date{}
\maketitle
\thispagestyle{empty}

\begin{abstract}	
{\footnotesize  Consider a semigraph $G=(V,\,E)$; in this paper, we study the eigenvalues of the Laplacian matrix of $G$. We show that the Laplacian of $G$ is positive semi-definite, and $G$ is connected if and only if $\lambda_2 >0.$ Along the similar lines of graph theory bounds on the largest eigenvalue, we obtain upper and lower bounds on the largest Laplacian eigenvalue of G and enumerate the Laplacian eigenvalues of some special semigraphs such as star semigraph, rooted 3-uniform semigraph tree. }
 \end{abstract}

{\small \textbf{Keywords:} {\footnotesize Adjacency matrix of semigraph, Laplacian of semigraph, Eigenvalues } }

\indent {\small {\bf 2000 Mathematics Subject Classification:} 05C15, 05C99}

\section{Introduction}
In \cite{smt}, Sampathkumar generalized the definition of a graph and defined the notion of a semigraph. A semigraph structure looks similar to a linear hypergraph structure but has ordered edges; hence, semigraphs are different from linear hypergraphs.  The adjacency matrix of semigraphs is considered in \cite{cmd}, \cite{smt}, but the matrix is not symmetric. In an attempt to refine the definition and make it symmetric so that one can use linear algebra tools, the adjacency matrix of semigraphs is introduced and studied in \cite{pms1}. The adjacency matrix of a semigraph is symmetric, and when a semigraph is a graph it gives us the adjacency matrix of a graph. In \cite{pms1}, author showed that the spectral graph theory could be extended to the spectral semigraph theory.  This motivated us to look at the Laplacian matrix of semigraphs. We introduce the Laplacian matrix of a semigraph and investigate its spectra along similar lines to the Laplacian spectra of graphs. The generalization is natural because when a semigraph is a graph, our Laplacian matrix coincides with the Laplacian matrix of graphs \cite{rbp}. This paper focuses on studying the spectral properties of the semigraph Laplacian. In section 2, we give the definitions of the Laplacian and signless Laplacian matrices of a semigraph and show that both are positive semi-definite. Further, we show that semigraph is connected iff the second Laplacian eigenvalue $\lambda_2 >0$. In section 3, we obtain upper and lower bounds on the largest Laplacian eigenvalue $\lambda_n$ of a semigraph. In section 4, we enumerate Laplacian spectra of some special types of semigraphs: star semigraph, rooted 3-uniform semigraph tree. 

\section*{Preliminaries}
In this section, we recall some definitions, and for all other standard notations and definitions, we refer to \cite{pms1}, \cite{smt}.
\begin{definition} \label{def:1}
Consider a non-empty set $V$ with $|V|=n\geq 2$, $P_{o}(V)$ denotes the set of all ordered $k$-tuples of elements of $V,$ $1\leq k\leq n$.  A pair G=$(V, E),$ where $V$ is called vertex set and $E\subseteq P_{o}(V)$ is called edge set, defines a semigraph if it satisfies the following two conditions:

\begin{enumerate}
	\item  For all $e_1, e_2 \in E$, $|e_1 \cap e_2|\leq 1$.
	\item Let $e_1=(v_1,v_2,\cdots,v_p)$ and $e_2=(u_1,u_2,\cdots,u_q)$ be two edges, $e_1=e_2$ if 
	\begin{enumerate}
	\item[i)] p = q and
	\item[ii)] either $v_k = u_k$ for $1\leq k \leq p,$  or $v_k=u_{p-k+1}$, for $1\leq k \leq p.$ 
	\end{enumerate}
\end{enumerate}
\end{definition}

\noindent Note that the edges $(u_1,u_2,\cdots,u_r)$ and $(u_r,u_{r-1},\cdots,u_1)$ are equal.\\\\
Let $V$ and  $E$ be vertex and edge sets of a semigraph $G$, the vertices $v_i,\; v_j \in V$ are said to be {\bf\itshape adjacent\;} if $\{v_i,\;v_j\} \subseteq e$ for some edge $e\in E$. If $v_i, \;v_j$ are two vertices which are consecutive in order such that  $\{v_i,\;v_j\} \subseteq e$ for some edge $e\in E$ then we say that they are {\bf\itshape consecutively adjacent}. 
\vskip2mm
\noindent Let $e=(v_1,v_2,\cdots,v_p)$ be an edge, the vertices $v_1$, $v_p$ of an edge $e$ are {\bf \it end} vertices, and $v_2,v_3,\cdots,v_{p-1}$ are {\bf \it middle} vertices. Here, for all $1\leq i, j\leq p$,  vertices $v_i, \; v_j$ are adjacent while $v_i, v_{i+1}$ are consecutively adjacent $\forall \,1\leq i\leq p-1$. 
The ordering in the edges gives rise to different types of vertices and edges, and we define those as follows: If $v_i$ is an end vertex $\forall \, e$ with $v_{i}\in e$, then $v_i$ is called a {\bf \it pure end vertex}, and if $v_{i}$ is middle vertex $\forall \, e$ with $v_{i}\in e$ then $v_i$ is called a {\bf \it pure middle vertex}. If for $v_i$, there exist $e_1, e_2\in E$ such that $v_i$ is a middle vertex of $e_1$ and end vertex of $e_2$ then $v_i$ is called a {\bf \it middle end vertex}. If both $v_1$ and $v_r$ of an edge $e=(v_1, v_2, \cdots, v_r),\; r\geq2$ are pure end vertices then $e$ is called {\bf \it full edge}. If either $v_1$ or $v_r$ (or both) of an edge $e=(v_1, v_2, \cdots, v_r),\; r>2$  are middle end vertices or if $e=(v_1, v_2)$ and exactly one of $v_1, v_2$ is a pure end vertex and the other is a middle end vertex then $e$ is called an {\bf \it half edge}. If both  $v_1$ and $v_2$ of an edge $e=(v_1, v_2)$ are middle end vertices then we say that $e$ is a {{\bf \it  quarter edge}. Let $e=(v_1, v_2, \cdots, v_r)$ be a full edge, then $(v_j, v_{j+1})\; \forall\; 1\leq j\leq r-1$ is said to be a {\bf \it partial edge} of $e$, and if $e=(v_1, v_2, \cdots, v_{r-1}, v_r)$ is an half edge then $(v_1, v_2)$ is said to be partial half edge if $v_1$ is middle end vertex,  $(v_{r-1}, v_r)$ is a partial half edge if $v_r$ is middle end vertex and $(v_i, v_{i+1})\; \forall\; 2\leq i\leq r-2$ are partial edges. 
\vskip2mm
\begin{example}
Consider the vertex set  $V = \{w_1,w_2,w_3,w_4,w_5,w_6,w_{7}\}$ and an edge set
$E = \{(w_1,w_2,w_3,w_4),$ $(w_2,w_5,w_6),$ $(w_3,w_7,w_6),$ $(w_5,w_7)\}$, note that $G=(V, E)$ is a semigraph. 
\end{example}

\begin{figure}[h]
\centering
 \begin{tikzpicture}[yscale=0.5]
 \Vertex[size=0.2,  label=$w_1$, position=below, color=black]{A} 
  \Vertex[x=2.5, size=0.2,label=$w_2$, color=none, position=below]{B} 
  \Vertex[x=5,size=0.2,label=$w_3$,position=below, color=none]{C} 
   \Vertex[x=7, size=0.2, label=$w_4$,position=below, color=black]{D}
  \Vertex[x=2.5,y=2,size=0.2,label=$w_5$,position=left, color=none]{E}  
  \Vertex[x=2.5, y=4, size=0.2, label=$w_6$,position=above, color=black]{F}
  \Vertex[x=3.8,y=2,size=0.2,label=$w_7$,position=right, color=none]{G}  
  \Edge(A)(B) \Edge(B)(C) \Edge(C)(D)  \Edge(G)(F) \Edge(F)(E)  \Edge(C)(G)
  \draw[thick](2.4, 0.3)--(2.6, 0.3);
  \draw[thick](2.65, 1.8)--(2.65,2.2);
    \draw[thick](3.6, 1.8)--(3.6,2.15);
    \draw[thick](2.5, 1.8)--(2.5,0.3);
    \draw[thick](2.65,2)--(3.6,2);
\end{tikzpicture}
\caption{}
\label{fig:1}
 \end{figure}

In Fig.~\ref{fig:1}, vertices $w_1,w_4$, and $w_6$ are the pure end vertices; $w_3$ is the pure middle vertex; $w_2$,$w_5$ and $w_7$ are the middle end vertices. Further, $(w_1, w_2, w_3, w_4)$ is full edge, whereas $(w_2, w_5, w_6)$ is an half edge with only $(w_2, w_6)$ as a partial half edge. Note that $(w_5, w_7)$ is a quarter edge. 
 \begin{definition}  \label{def:2}
Let $G$ be a semigraph with vertex set $V$ and edge set $E$. If $\forall\, u,\,v\in V$ there exist $e_{i_1},\cdots, e_{i_p}\in E$ such that $u\in e_{i_1},\; v\in e_{i_p}$ and $|e_{i_{j}}\cap e_{i_{j+1}}|=1,\; \forall\; 1\leq j\leq p-1$ then $G$ is called connected semigraph.
 \end{definition}
In this paper, our semigraph $G=(V, E)$ is connected. Here, $n$ denotes the number of vertices, $m$ denotes the number of edges such that $m=m_1+m_2+m_3+m_4;$ where $m_1,\;m_2,\;m_3,\; m_4$ are number of full edges, quarter edges, half edges with one partial half edge, and half edges with two partial half edges respectively. If $G$ is a graph then $m_2=m_3=m_4=0$ and $m=m_1$.

\vskip2mm

 \subsection{Adjacency matrix}
  Consider a semigraph $G$=$(V, E),$ with $V=\{v_1, v_2,\cdots, v_n\}$ as a vertex set and $E=\{e_1, e_2,\cdots, e_m\}$ as an edge set.  
Recall that the graph skeleton $G^{S}$ ~\cite[definition 1.5]{pms1} of $G$ is the graph defined on $V$ such that two vertices are adjacent in $G^{S}$ if and only if they are consecutively adjacent in $G.$ For any two vertices $u_i$ and $u_j$ of an edge $e$, $d_{e}(u_i,\,u_j)$ represent the distance between $u_i$ and $u_j$ in the graph skeleton of the edge $e$. As each pair of vertices in semigraph belongs to at most one edge, the distance $d_{e}(u_i,u_j)$ is well-defined.  
  \begin{definition}~\cite{pms1} \label{def:4}
  We index the rows and columns of a matrix $A=(a_{ij})_{n\times n}$ by vertices $ v_1, v_2,\cdots, v_n,$ where $a_{ij}$ is given as follows:
$$a_{ij}=\begin{cases}
 d_{e}(v_i,v_j),& \text{if $v_i,\; v_{j}$ belong to a full edge or a half edge such that } \\
 &\text{$(v_i, v_j)$ is neither a partial half edge nor a quarter edge}\\
\;\;\; \frac{1}{2}, & \text{if $(v_i,\;  v_{j})$ is a partial half edge}\\
\;\;\;\frac{1}{4}, &\text{if $(v_i,  v_{j})$ is a quarter edge}\\
\;\;\;0,&\text{otherwise}
\end{cases}$$ 
\end{definition}
\noindent The above matrix $A=(a_{ij})_{n\times n}$ is the adjacency matrix of semigraph $G.$ 
Let $A_{i}$ be the $i^{th}$ row of the adjacency matrix $A$ associated with the vertex, say $v_i$; the number $d_i=A_{i}\mathbf{1}$ is the degree of $v_i$, where $\mathbf{1}$ is the column vector of all entries 1.
   
\section{Laplacian of Semigraph}
Let $G=(V,E)$ be a semigraph with $|V|=n,\; |E|=m$. The Laplacian of graph is extensively studied matrix \cite{mohar}; along similar lines, we define the Laplacian of semigraph. If $G$ is a graph, then semigraph Laplacian is the same as the graph Laplacian. This motivated us to study the spectral properties of Laplacian of semigraphs. Let $D$ is a diagonal degree matrix  and $A$ is the adjacency matrix of semigraph $G$, then we define the Laplacian as $$L=D-A$$
\begin{example} 
The Laplacian matrix of the semigraph in Fig.\ref{fig:1} is
 $$\begin{pmatrix} 
6&-1&-2&-3&0&0&0\\
-1&6.5&-1&-2&-\frac{1}{2}&2&0\\
-2&-1&7&-1&0&-2&-1\\
-3&-2&-1&6&0&0&0 \\
0&-\frac{1}{2}&0&0&1.75&-1&-\frac{1}{4}\\
0&-2&-2&0&-1&6&-1\\
0&0&-1&0&-\frac{1}{4}&-1&2.25
\end{pmatrix}    $$
\end{example}

Our goal is to study the eigenvalues of Laplacian matrix. The following result helps us to deduce that eigenvalues are non-negative.  
\subsection{Positive semi-definite}

\par Here, we show that the Laplacian of semigraph is positive semi-definite. This makes the study of spectra of semigraphs interesting. \\
Let $G$ be a semigraph, and let $e=(u, v)$ be an edge having two vertices, then we have three types of edges based on whether $u,\;v$ are pure end vertices or middle end vertices. Let $L_e$ denote the Laplacian matrix of the semigraph edge $e$. Our aim is to find a quadratic expression $x^tL_ex$ for the edge $e$ of size, say $r$, where $x\in \mathbb{R}^r$.
\begin{observation} \label{obs:0}
Let $e=(u, v)$ be an edge in $G$. For $x=(x_1, x_2)^t\in \mathbb{R}^2$, $$x^tL_{e}x=\mu (x_1-x_2)^2$$ 
where $\mu=\begin{cases} 
      1, & \text{if both u and v are pure end vertices} \\
      \frac{1}{2}, & \text{if one of u and v is a middle end vertex}  \\
      \frac{1}{4}, & \text{if both u and v are middle end vertices}
   \end{cases}$

\begin{itemize}
\item[Case 1]: Both $u$ and $v$ are pure end vertices.  Then $L_e=\begin{pmatrix} 1&-1\\-1&1\end{pmatrix}.$\\ 
Thus, for $x=\begin{pmatrix}x_1\\x_2\end{pmatrix}\in \mathbb{R}^2$, we get $x^tL_ex=(x_1-x_2)^2$.
\item[Case 2]: Both $u$ and $v$ are middle end vertices.  Then $L_e=  \renewcommand{\arraystretch}{1.5}\begin{pmatrix} \frac{1}{4}&-\frac{1}{4}\\-\frac{1}{4}&\frac{1}{4}\end{pmatrix}.$\\
Thus, for $x=\begin{pmatrix}x_1\\x_2\end{pmatrix}\in \mathbb{R}^2$, we get $x^tL_ex=\frac{1}{4}(x_1-x_2)^2$.
\item[Case 3]: One of $u$ and $v$ is middle end vertex.  Then $L_e=  \renewcommand{\arraystretch}{1.5}\begin{pmatrix} \frac{1}{2}&-\frac{1}{2}\\-\frac{1}{2}&\frac{1}{2}\end{pmatrix}.$ \\
Thus, for $x=\begin{pmatrix}x_1\\x_2\end{pmatrix}\in \mathbb{R}^2$, we get $x^tL_ex=\frac{1}{2}(x_1-x_2)^2$.
\end{itemize}
\end{observation}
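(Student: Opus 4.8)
The plan is to collapse all three cases into one uniform computation, since a two-vertex edge produces a $2\times 2$ Laplacian whose only freedom is the single off-diagonal value. The key point is that for $e=(u,v)$ the adjacency matrix $A_e$ is symmetric with zero diagonal, so it is determined entirely by the entry $a_{uv}=a_{vu}=:\mu$, whose value is read off from Definition~\ref{def:4} according to the type of $e$. First I would fix $\mu$ in each case: if both $u,v$ are pure end vertices then $e$ is a full edge and $(u,v)$ is consecutively adjacent with skeleton distance $1$, so $\mu=d_e(u,v)=1$; if exactly one of $u,v$ is a middle end vertex then $e$ is a half edge and $(u,v)$ is a partial half edge, so $\mu=\tfrac12$; if both are middle end vertices then $e$ is a quarter edge, so $\mu=\tfrac14$. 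This reproduces exactly the three-way case split in the statement.

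Next I would compute the degree matrix $D_e$. Since each of $u$ and $v$ has a single neighbour inside the edge $e$, its degree is the sum of its row of $A_e$, which equals $\mu$ in every case; hence $D_e=\mu I_2$. Subtracting then gives the single uniform expression
$$L_e=D_e-A_e=\mu\begin{pmatrix}1&-1\\-1&1\end{pmatrix},$$
which specialises to the three displayed matrices upon substituting $\mu=1,\tfrac12,\tfrac14$.

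Finally, for $x=(x_1,x_2)^t\in\mathbb{R}^2$ I would expand the quadratic form directly:
$$x^tL_ex=\mu\bigl(x_1^2-2x_1x_2+x_2^2\bigr)=\mu(x_1-x_2)^2,$$
settling all three cases simultaneously.

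There is no genuine obstacle here; the content is a direct verification. The only step requiring care is the faithful reading of Definition~\ref{def:4}: one must correctly match each configuration of vertex types to its adjacency entry ($d_e(u,v)$, $\tfrac12$, or $\tfrac14$) and note in particular that a two-vertex full edge has skeleton distance $1$. Once this bookkeeping is done, the three apparently separate cases collapse into the single scalar $\mu$ multiplying the fixed rank-one pattern $\begin{pmatrix}1&-1\\-1&1\end{pmatrix}$, and the identity follows at once. As a side benefit, this form makes transparent that $L_e$ is positive semi-definite for every edge type, which is the fact the later global argument will aggregate over all edges.
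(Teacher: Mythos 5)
Your proposal is correct and follows essentially the same route as the paper: the paper's proof is exactly the three-case verification of $L_e$ and its quadratic form, which you merely package more uniformly by writing $L_e=\mu\left(\begin{smallmatrix}1&-1\\-1&1\end{smallmatrix}\right)$ with $\mu$ read off from Definition~\ref{def:4}. Your identification of $\mu$ in each case ($d_e(u,v)=1$ for a full edge, $\tfrac12$ for a partial half edge, $\tfrac14$ for a quarter edge) and of $D_e=\mu I_2$ matches the matrices displayed in the observation, so nothing is missing.
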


We need the following lemma to get a quadratic expression for laplacian of an edge $|e|\geq 3$
\begin{lemma}\label{lemma:1}
Let $x\in \mathbb{R}^l$, where $x=\begin{pmatrix}x_1\\ \vdots \\ x_l\end{pmatrix}.$ Let $L_e$ denote the Laplacian of semigraph edge $e=(v_{i_1}, v_{i_2},\cdots,v_{i_l}),\; l\geq3$. Then  $$\displaystyle x^tL_e x=\sum_{j=1}^{l-1}\sum_{i=1}^{l-j}\mu_{ji}(x_j -x_{j+i})^2$$ 
where  $\mu_{ji}=i,$ except when $v_{i_1}$ is middle end vertex then $\mu_{11}=\frac{1}{2}$, if $v_{i_l}$ is middle end vertex then $\mu_{(l-1)1}=\frac{1}{2}$.
\end{lemma}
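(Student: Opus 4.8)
The plan is to reduce the statement to the standard quadratic-form identity for a Laplacian and then read off the coefficients $\mu_{ji}$ directly from the definition of the adjacency matrix (Definition~\ref{def:4}). Writing $L_e = D_e - A_e$, where $A_e = (a_{jk})$ is the symmetric, zero-diagonal adjacency matrix of the edge $e$ and $D_e = \mathrm{diag}(d_1,\dots,d_l)$ with $d_j = \sum_{k\neq j} a_{jk}$, I would first establish the purely algebraic fact that for any such pair $(D_e, A_e)$ one has
$$x^t L_e x = \sum_{1\le j<k\le l} a_{jk}\,(x_j - x_k)^2.$$
This follows by expanding the right-hand side: the squared terms $x_j^2$ accumulate the coefficient $\sum_{k\neq j} a_{jk} = d_j$, recovering $x^t D_e x$, while the cross terms $-2a_{jk}x_jx_k$ (using $a_{jk}=a_{kj}$) recover $-x^t A_e x$. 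Reindexing with $k = j+i$ turns the double sum into $\sum_{j=1}^{l-1}\sum_{i=1}^{l-j} a_{j,j+i}\,(x_j-x_{j+i})^2$, so it remains only to identify $\mu_{ji} = a_{j,j+i}$.

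For the identification step I would invoke Definition~\ref{def:4}. Since the vertices $v_{i_1},\dots,v_{i_l}$ are listed in edge-order, the graph skeleton of $e$ is a path, and hence $d_e(v_{i_j}, v_{i_{j+i}}) = i$. Because $l \ge 3$, the edge $e$ is either a full edge or a half edge (there are no quarter edges among edges of size $\ge 3$), so every pair $(v_{i_j}, v_{i_{j+i}})$ that is neither a partial half edge nor a quarter edge receives the value $a_{j,j+i} = d_e(v_{i_j},v_{i_{j+i}}) = i$, giving $\mu_{ji} = i$ generically. The only pairs that can be partial half edges are the two terminal consecutive pairs $(v_{i_1}, v_{i_2})$ and $(v_{i_{l-1}}, v_{i_l})$, and precisely when the corresponding end vertex ($v_{i_1}$, respectively $v_{i_l}$) is a middle end vertex; in those cases Definition~\ref{def:4} assigns the value $\tfrac{1}{2}$, yielding $\mu_{11} = \tfrac12$ and $\mu_{(l-1)1} = \tfrac12$. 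This accounts exactly for the stated exceptions.

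The bulk of the argument is routine; the one place that deserves care is the bookkeeping in the identification step — namely verifying that no interior consecutive pair can be a partial half edge, and that the degree sums $d_j$ are consistent with the modified terminal coefficients, so that the algebraic identity is applied to the correct matrix. I expect the main (modest) obstacle to be precisely this case-checking: confirming from the definitions of full/half edge and middle end vertex that the $\tfrac12$ values occur only at the two ends and only under the stated conditions, while all remaining entries equal the skeleton distance $i$.
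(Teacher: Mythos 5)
Your proof is correct, but it takes a genuinely different route from the paper. The paper proves the lemma by induction on the edge size $l$: it verifies the case $l=3$ explicitly, then in the inductive step writes $L_e$ as a block containing $L_{e'}$ for the truncated edge $e'=(v_{i_1},\dots,v_{i_{l-1}})$ (padded with a zero row and column) plus a rank-structured remainder whose quadratic form is expanded by hand, contributing the terms $\sum_{j} a_{j,l}(x_j-x_l)^2$. You instead bypass induction entirely by invoking the general identity $x^tL_ex=\sum_{j<k}a_{jk}(x_j-x_k)^2$, valid for any symmetric zero-diagonal $A_e$ with $D_e$ its row-sum diagonal, and then reduce the lemma to reading off $a_{j,j+i}$ from Definition~\ref{def:4}. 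Your identification step is the right one and is complete: for $l\ge 3$ there are no quarter edges, the skeleton of $e$ is a path so $d_e(v_{i_j},v_{i_{j+i}})=i$, and the only pairs that can be partial half edges are the two terminal consecutive pairs, which is exactly where the $\tfrac12$ exceptions sit. Your approach is arguably cleaner: the paper's inductive step has to worry about the status of $v_{i_{l-1}}$ as an end vertex of the truncated edge $e'$ (it resolves this with a slightly awkward ``without loss of generality'' assumption that $v_{i_{l-1}}$ is not a middle end vertex of $e'$), whereas your direct computation never detaches a subedge and so never faces that issue. What the paper's decomposition buys in exchange is an explicit display of the edge Laplacian's entries and degrees, which it reuses implicitly elsewhere; your argument establishes the same identity with less bookkeeping and makes the positive semi-definiteness of $L_e$ transparent at a glance.
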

\begin{proof}
For an edge $e$, $L_e$ denote the Laplacian of semigraph edge. We prove this result by induction on $l$. 
\begin{itemize}
\item{Step 1:} Let $l=3$, $e=(v_{i_1}, v_{i_2}, v_{i_3})$. Then $$L_e=\begin{pmatrix}\mu_{11}+2&-\mu_{11}&-2\\ -\mu_{11}&\mu_{11}+\mu_{21}&-\mu_{21}\\-2&-\mu_{21}&\mu_{21}+2\end{pmatrix}$$
where $\mu_{11}=1$ if $v_{i_1}$ is pure end vertex and it is $\frac{1}{2}$ when $v_{i_1}$ is middle end vertex and $\mu_{21}=1$ if $v_{i_3}$ is pure end vertex and it is $\frac{1}{2}$ when $v_{i_3}$ is middle end vertex. Thus, for $x=\begin{pmatrix}x_1,x_2,x_3\end{pmatrix}^t\in \mathbb{R}^3$, we have 
\begin{align*}x^tL_ex=&\;(\mu_{11}+2)x^2_1-\mu_{11}x_1x_2-2x_1x_3-\mu_{11}x_1x_2+(\mu_{11}+\mu_{21})x^2_2-\mu_{21}x_2x_3\\
&-2x_1x_3-\mu_{21}x_2x_3+(\mu_{21}+2)x^2_3 \\
=&\;\mu_{11}(x^2_1-2x_1x_2+x^2_2)+2(x^2_1-2x_1x_3+x^2_3)+\mu_{21}(x^2_2-2x_2x_3+x^2_3)\\
=&\;\mu_{11}(x_1-x_2)^2+2(x_1-x_3)^2+\mu_{21}(x_2-x_3)^2\\
x^tL_ex=&\;\sum_{j=1}^{3-1}\sum_{i=1}^{3-j}\mu_{ji}(x_j -x_{j+i})^2
\end{align*}
\item{Step 2:} Assume that formula is true for an edge of size $l-1$. \\
For $x\in \mathbb{R}^{l-1}$, where $x=\begin{pmatrix}x_1,x_2,\cdots  x_{l-1}\end{pmatrix}^t$, we have 
$$\displaystyle x^tL_e x=\sum_{j=1}^{l-2}\;\sum_{i=1}^{l-1-j}\mu_{ji}(x_j -x_{j+i})^2$$
\item{Step 3:} Let $e=(v_{i_1}, v_{i_2},\cdots, v_{i_{l-1}},v_{i_l})$. The Laplacian matrix $L_e$ of the edge $e$ is as follows:
$$\begin{pmatrix}
d_1&-\mu_{11}&-2&-3&\cdots&-l+3&-l+2&-l+1\\
-\mu_{11}&d_2&-1&-2&\cdots&-l+4&-l+3&-l+2\\
-2&-1&d_3&-1&\cdots&-l+5&-l+4&-l+3 \\
-3&-2&-1&d_4&\cdots&-l+6&-l+5&-l+4 \\
&&&&\ddots&&&\\
-l+3&-l+4&-l+5&-l+6&\cdots&d_{l-2}&-1&-2\\
-l+2&-l+3&-l+4&-l+5&\cdots&-1&d_{l-1}&-\mu_{(l-1)1}\\
-l+1&-l+2&-l+3&-l+4&\cdots&-2&-\mu_{(l-1)1}&d_{l}
\end{pmatrix}$$
where, $d_j$ is the degree of the  $i_j^{th}$ vertex of $e$. \\
Here, $d_j=(j-1)+\cdots+2+1+1+2+\cdots+(l-j)$ for all $2< j<l-1$.  \\
And \begin{align*}d_1=&\;\mu_{11}+2+3+\cdots+(l-1)\\
d_2=&\;\mu_{11}+1+2+\cdots+(l-2)\\
d_{l-1}=&\;(l-2)+(l-3)+\cdots+1+\mu_{(l-1)1}\\
d_r=&\; (l-1)+(l-2)+\cdots+2+\mu_{(l-1)1}
\end{align*}
We can rewrite $L_e$ as 
\pagebreak
$$\begin{pmatrix}
d_1-(l-1)&-\mu_{11}&-2&\cdots&-l+3&-l+2&0\\
-\mu_{11}&d_2-(l-2)&-1&\cdots&-l+4&-l+3&0\\
-2&-1&d_3-(l-3)&\cdots&-l+5&-l+4&0 \\
&&&\ddots&&&\\
-l+3&-l+4&-l+5&\cdots&d_{l-2}-2&-1&0\\
-l+2&-l+3&-l+4&\cdots&-1&d_{l-1}-\mu_{(l-1)1}&0\\
0&0&0&\cdots&0&0&0
\end{pmatrix}$$
\begin{center}+\end{center}
$$\begin{pmatrix}
l-1&0&0&\cdots&0&0&-(l-1)\\
0&l-2&0&\cdots&0&0&-l+2\\
0&0&l-3&\cdots&0&0&-l+3 \\
&&&\ddots&&&\\
0&0&0&\cdots&-2&0&-2\\
0&0&0&\cdots&0&\mu_{(l-1)1}&-\mu_{(l-1)1}\\
-l+1&-l+2&-l+3&\cdots&-2&-\mu_{(l-1)1}&d_l
\end{pmatrix}$$
For the first matrix in the summand, say $B$, the last row and column are zero. Hence, the last column and row of $B$ don't contribute anything in the quadratic form expression. Thus, for $x=\begin{pmatrix}x_1,x_2,\cdots  x_{l}\end{pmatrix}^t \in \mathbb{R}^l$ we have $x^tBx=x^tL_{e'} x$, where $e'=(v_{i_1}, v_{i_2},\cdots, v_{i_{l-1}})$. 
Thus, by induction assumption, for $x\in \mathbb{R}^{l}$, with $x=\begin{pmatrix}x_1,x_2,\cdots  x_{l}\end{pmatrix}^t$, we have
$$\displaystyle x^tL_{e'} x=\sum_{j=1}^{l-2}\sum_{i=1}^{l-1-j}\mu_{ji}(x_j -x_{j+i})^2$$ 
 where $\mu_{ji}=i$ except when $v_1$ is middle end vertex then $\mu_{11}=\frac{1}{2}$, here without loss of generality we assume that $v_{i_{l-1}}$ is not a middle end vertex of the edge $e'$.\\
 Thus, for $x\in \mathbb{R}^{l}$, where $x=\begin{pmatrix}x_1,x_2,\cdots  x_{l}\end{pmatrix}^t$, we get
$$\displaystyle x^tL_{e} x=x^tL_{e'} x +x^tL_{e-e'}x\;\;\;\;\;\;\cdots (1)$$ 
where $L_{e-e'}$ is the second matrix in the summand. 

Note that the second matrix in the summand gives us  \begin{align*}x^tL_{e-e'}x=&(l-1)x^2_1-(l-1)x_1x_l+(l-2)x^2_2-(l-2)x_2x_l-\cdots\\
&+2x^2_{l-2}-2x_{l-2}x_l+\mu_{(l-1)1}x^2_{l-1}-\mu_{(l-1)1}x_{l-1}x_r-(l-1)x_1x_l\\
&-(l-2)x_2x_l-\cdots-2x_{l-2}x_l-\mu_{(l-1)1}x_{l-1}x_l+d_lx^2_l\\
&=(l-1)(x_1-x_r)^2+\cdots+2(x_{l-1}-x_l)^2+\mu_{(l-1)1}(x_{l-1}-x_l)^2
\end{align*}
\end{itemize}
Thus by (1), we get 
$$\displaystyle x^tL_e x=\sum_{j=1}^{r-1}\sum_{i=1}^{r-j}\mu_{ji}(x_j -x_{j+i})^2$$ 
\end{proof}

\begin{theorem}
Let $L$ be the Laplacian matrix of a semigraph $G=(V, E)$, $L$ is positive semi-definite.
\end{theorem}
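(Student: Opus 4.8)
The plan is to show that for every $x \in \mathbb{R}^n$ the scalar $x^t L x$ is a sum of the edgewise quadratic forms already computed, each of which is manifestly nonnegative. First I would exploit the defining semigraph axiom $|e_1 \cap e_2| \leq 1$: since any two distinct vertices lie in at most one common edge, every off-diagonal entry $a_{ij}$ of the adjacency matrix is contributed by a single edge, namely the unique edge (if any) containing both $v_i$ and $v_j$. Consequently $A = \sum_{e \in E} \widehat{A_e}$, where $\widehat{A_e}$ is the $n \times n$ matrix obtained by placing the adjacency matrix of the isolated edge $e$ in the rows and columns indexed by the vertices of $e$, and zeros elsewhere.

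The degree bookkeeping decomposes in the same way. Because $d_i = A_i \mathbf{1}$ is the $i$th row sum and each nonzero entry of that row belongs to exactly one edge through $v_i$, we get $d_i = \sum_{e \ni v_i} d_i^{(e)}$, where $d_i^{(e)}$ is the within-edge degree of $v_i$ in $e$; hence $D = \sum_{e \in E}\widehat{D_e}$ with $\widehat{D_e}$ the correspondingly embedded diagonal degree matrix of $e$. Subtracting gives the edge decomposition of the Laplacian $L = D - A = \sum_{e \in E}(\widehat{D_e} - \widehat{A_e}) = \sum_{e \in E}\widehat{L_e}$, where $\widehat{L_e}$ is $L_e$ embedded at the vertices of $e$.

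With the decomposition in hand the conclusion is immediate. For any $x \in \mathbb{R}^n$, writing $x|_e$ for the subvector of $x$ on the vertices of $e$, linearity gives $x^t L x = \sum_{e \in E}(x|_e)^t L_e (x|_e)$. Each summand is nonnegative: if $|e| = 2$ then Observation \ref{obs:0} gives $(x|_e)^t L_e (x|_e) = \mu (x_1 - x_2)^2$ with $\mu \in \{1, \tfrac{1}{2}, \tfrac{1}{4}\}$ strictly positive, while if $|e| \geq 3$ then Lemma \ref{lemma:1} gives $(x|_e)^t L_e (x|_e) = \sum_{j}\sum_{i}\mu_{ji}(x_j - x_{j+i})^2$ in which every coefficient $\mu_{ji}$ is either a positive integer $i$ or equals $\tfrac{1}{2}$, hence strictly positive. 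Thus $x^t L x \geq 0$ for all $x$, and $L$ is positive semi-definite.

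I expect the main obstacle to be the first step, namely justifying $L = \sum_{e \in E} \widehat{L_e}$, rather than the final estimate. The subtlety is that the semigraph adjacency entries $a_{ij} = d_e(v_i, v_j)$ are not $0/1$ but encode graph-skeleton distances within an edge, so one must verify that the value assigned globally in Definition \ref{def:4} agrees, entry by entry, with the value arising from the isolated edge $e$; the axiom $|e_1 \cap e_2| \leq 1$ is exactly what guarantees there is no double counting and that each entry is governed by a single edge. Once this combinatorial disjointness is established, the positivity of all coefficients in Observation \ref{obs:0} and Lemma \ref{lemma:1} finishes the argument with no further computation.
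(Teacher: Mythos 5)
Your proof follows essentially the same route as the paper: decompose $L=\sum_{e\in E}L_e$ with each $L_e$ embedded at the vertices of $e$, then invoke Observation \ref{obs:0} and Lemma \ref{lemma:1} to write $x^tLx$ as a sum of squares with positive coefficients. You are in fact somewhat more careful than the paper, which asserts the edge decomposition simply ``by additivity,'' whereas you correctly identify the axiom $|e_1\cap e_2|\leq 1$ as the reason no entry of $A$ or $D$ is double-counted.
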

\begin{proof}
For an edge $e=(v_{i_1}, v_{i_2},\cdots, v_{i_l})$ we re-write $L_e$ as 
\[\renewcommand{\arraystretch}{1.5} \begin{pmatrix}
  \begin{matrix}
  L_e
  \end{matrix}
  & \rvline & \bigzero_{l\times n-l} \\
\hline
  \bigzero_{n-l\times l} & \rvline &
  \begin{matrix}
  \bigzero_{n-l\times n-l}
 \end{matrix}
\end{pmatrix}_{n\times n} \]
By additivity, we can write $\displaystyle L=\sum_{e\in E}L_e$\\
Thus, $$\displaystyle x^tLx=x^t\left(\sum_{e\in E}L_e\right)x=\sum_{e\in E}x^tL_ex$$
By the Lemma \ref{lemma:1} and observation \ref{obs:0}, we get
$$\displaystyle x^tLx= \sum_{e\in E}\sum_{j=1}^{l-1}\sum_{i=1}^{l-j}\mu_{ji}(x_j -x_{j+i})^2$$
where $\mu_{ji}$ are defined as earlier, and these are positive numbers. This implies $L$ is positive semi-definite.
\end{proof}
\begin{remark}
We define the signless laplacian $Q$ as $D+A$. Using the similar arguments above, we get
$$\displaystyle x^tQx= \sum_{e\in E}\sum_{j=1}^{l-1}\sum_{i=1}^{l-j}\mu_{ji}(x_j +x_{j+i})^2$$
Hence, signless laplacian is also positive semi-definite.
\end{remark}

\subsection{Algebraic connectivity of semigraph}
We show that the graph theory result about algebraic connectivity holds true for semigraphs, and the proof goes similar as well. Recall that for any graph $G$, graph is connected iff second eigenvalue $\lambda_2$ of Laplacian is positive. 
\begin{theorem}
Let $G=(V,E)$ be a semigraph, and let $0=\lambda_1\leq\lambda_2\leq\lambda_3\leq\cdots\leq \lambda_n $.  Then $G$ is connected iff  $\lambda_2 >0$.
\end{theorem}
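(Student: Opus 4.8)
The plan is to follow the classical graph-theoretic argument, replacing the edge-sum $\sum_{\{i,j\}}(x_i-x_j)^2$ by the weighted, all-pairs-within-an-edge sum furnished by Lemma~\ref{lemma:1} and Observation~\ref{obs:0}. First I would record that $\lambda_1=0$ always: since each degree is defined as a row sum of $A$, namely $d_i=A_i\mathbf{1}$, we have $L\mathbf{1}=(D-A)\mathbf{1}=0$, so the all-ones vector $\mathbf{1}$ lies in $\ker L$ and, $L$ being positive semi-definite, $0$ is the smallest eigenvalue. Consequently $\lambda_2>0$ is equivalent to $\dim\ker L=1$, and the whole statement reduces to showing that $G$ is connected iff the kernel of $L$ is spanned by $\mathbf{1}$.

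The central step is to identify $\ker L$ explicitly. Because $L$ is positive semi-definite, $Lx=0$ iff $x^tLx=0$. Using the identity
$$x^tLx=\sum_{e\in E}\sum_{j=1}^{l-1}\sum_{i=1}^{l-j}\mu_{ji}(x_j-x_{j+i})^2$$
with every $\mu_{ji}>0$, and noting that the index pairs $(j,j+i)$ range over \emph{all} pairs of vertices inside each edge $e$, the vanishing of $x^tLx$ forces $x_j=x_k$ for every two vertices $j,k$ belonging to a common edge. Hence $\ker L$ is exactly the space of vectors that are constant on the vertex set of each edge of $G$.

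It then remains to turn ``constant on each edge'' into ``constant on each connected component.'' If $G$ is connected, take any $u,v\in V$; by Definition~\ref{def:2} there is a chain of edges $e_{i_1},\dots,e_{i_p}$ with $u\in e_{i_1}$, $v\in e_{i_p}$ and $|e_{i_k}\cap e_{i_{k+1}}|=1$. A vector $x\in\ker L$ is constant on each $e_{i_k}$, and since consecutive edges share a vertex these constants coincide along the chain; therefore $x_u=x_v$. Thus $x$ is a scalar multiple of $\mathbf{1}$, $\dim\ker L=1$, and $\lambda_2>0$. Conversely, if $G$ is disconnected, I would split $V$ into a component $C$ and its complement $V\setminus C$ and take $x$ to be constant, with different values, on each part, chosen orthogonal to $\mathbf{1}$; such an $x$ is nonzero, constant on every edge (no edge meets both parts, by the definition of a component), hence lies in $\ker L$ and is independent of $\mathbf{1}$, giving $\dim\ker L\ge 2$ and $\lambda_2=0$.

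The main obstacle is the kernel characterization in the second paragraph: one must be sure that the quadratic-form sum really contains the difference $(x_j-x_k)^2$ with a strictly positive coefficient for \emph{every} pair within an edge, including the size-two edges of Observation~\ref{obs:0} and the partial-half and quarter cases, so that $x^tLx=0$ genuinely forces constancy on the full edge rather than only on consecutively adjacent pairs. Once that is secured, the propagation along connectivity chains and the construction of a second kernel vector in the disconnected case are routine, exactly as for graphs.
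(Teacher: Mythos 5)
Your proposal is correct and follows essentially the same route as the paper: both directions rest on the positive semi-definiteness of $L$ together with the quadratic-form identity of Lemma~\ref{lemma:1} (forcing a kernel vector to be constant on each edge and then, via connectivity chains, on all of $V$), and on exhibiting a second independent kernel vector when $G$ is disconnected. Your worry about the coefficients is already settled by the lemma, since every $\mu_{ji}$ is strictly positive and constancy on consecutively adjacent pairs alone would in any case propagate across an edge.
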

\begin{proof}
Assume that $G$ is disconnected. We show that $\lambda_2=0$.\\
For simplicity, assume that $G_1$ and $G_2$ are two connected components of $G$. By reordering the indices, we can re-write the Laplacian matrix as follows: 
$$L=\begin{pmatrix}   L_{G_1}&\mathbf{0}\\\mathbf{0}&L_{G_2}
\end{pmatrix}$$
Observe that $\begin{pmatrix}   \mathbf{1}\\ \mathbf{0}
\end{pmatrix},\;\begin{pmatrix}   \mathbf{0}\\\mathbf{1}
\end{pmatrix}$ are two orthogonal eigenvectors of the eigenvalue 0. Thus, the geometric multiplicity of 0 is greater than or equal to 2 and hence algebraic multiplicity is greater then or equal to 2 implies $\lambda_2=0$.\\
Assume that $G$ is connected; we show that algebraic multiplicity of 0 is 1. \\
Let $f\in\mathbb{R}^n$ be an eigenvector of 0. Thus, $Lf=0$, hence by Lemma \ref{lemma:1}, for each edge we get
$$\displaystyle 0=f^tLf= \sum_{j=1}^{r-1}\sum_{i=1}^{r-j}\mu_{ji}(x_j -x_{j+i})^2$$
Therefore,  $x_1=x_j$ for all vertices in an edge $e$. Thus, $f$ is constant on each edge, and since $G$ is connected $f$ is constant on $V$. Thus, $f$ is a constant vector; hence geometric multiplicity of 0 is 1. Thus, algebraic multiplicity is 1. Hence, $\lambda_2>0$. 
\end{proof}

\section{Bounds on the largest Laplacian eigenvalue}
We know that Laplacian is positive semi-definite, suppose $0= \lambda_1 \leq \lambda_2 \leq \cdots \leq \lambda_n$ are the eigenvalues. Let $\Delta$ be the maximum degree of a vertex, in this section, we prove that $\lambda_n \geq \Delta +1$ 
 \begin{theorem}
 Let $G=(V, E)$ be a semigraph with at least one edge. Let $0= \lambda_1 \leq \lambda_2 \leq \cdots \leq \lambda_n$ then  $\lambda_n \geq \Delta +1$, where $\Delta$ is the largest degree of vertex in the semigraph. 
 \end{theorem}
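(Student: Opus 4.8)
The plan is to exhibit a specific test vector $x \in \mathbb{R}^n$ and use the Rayleigh quotient characterization $\lambda_n = \max_{x \neq 0} \frac{x^t L x}{x^t x}$, which holds because $L$ is symmetric. The standard graph-theoretic argument (for $\lambda_n \geq \Delta + 1$) takes a vertex $v$ of maximum degree $\Delta$ and puts weight $\Delta$ on $v$, weight $-1$ on each neighbor of $v$, and $0$ elsewhere; one then checks that the resulting Rayleigh quotient is at least $\Delta + 1$. I would attempt the same here, but with a crucial caveat: in the semigraph setting the ``neighbors'' of $v$ and the edge-weights $\mu_{ji}$ are not all equal to $1$, so the naive choice must be adapted to the quadratic form supplied by Lemma~\ref{lemma:1}.

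First I would fix a vertex $v = v_p$ achieving the maximum degree $\Delta = d_p$. Rather than guessing the entries blindly, I would compute $\lambda_n \geq \frac{e_p^t L e_p + 2\,e_p^t L x' + x'^t L x'}{\cdots}$ by testing $x = e_p$ (the standard unit vector) first: this immediately gives $\lambda_n \geq L_{pp} = d_p = \Delta$, which is the easy bound but falls one short. To upgrade to $\Delta + 1$, the idea is to superimpose a small perturbation supported on the vertices adjacent to $v$ so that the off-diagonal (negative) entries of $L$ in row $p$ contribute positively to $x^t L x$. Concretely, I would try $x_p = \alpha$ for a suitable $\alpha$ and $x_j = -\beta\, a_{pj}$ (proportional to the adjacency weight) for $j \neq p$, then optimize $\alpha, \beta$. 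Using the additive decomposition $x^t L x = \sum_{e} \sum_{j}\sum_{i}\mu_{ji}(x_j - x_{j+i})^2$ from the positive-semidefiniteness theorem keeps every edge-contribution manifestly nonnegative and lets me track exactly which terms grow.

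The main obstacle I anticipate is bookkeeping the nonuniform weights: in a graph every incident edge contributes a clean $(x_p - x_j)^2$ with coefficient $1$, but here a single edge $e = (v_{i_1}, \ldots, v_{i_l})$ containing $v$ as, say, the $k$-th vertex contributes terms $\mu_{\cdot}(x_p - x_{i_t})^2$ with coefficients $|k - t|$ (or $\tfrac12, \tfrac14$ at the partial-half or quarter positions), and the degree $d_p = \sum_t a_{pt}$ already aggregates all these distances. I expect the clean statement to come out because the degree $d_p$ is defined precisely as the row-sum $A_p \mathbf{1}$, so the diagonal and the off-diagonal weights are matched; the challenge is choosing the perturbation coefficients so that the cross terms produce exactly the extra $+1$ rather than something weight-dependent.

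I would therefore finish by selecting $x$ with $x_p = \Delta$ and $x_j = -1$ on a single carefully chosen neighbor (or on all consecutively adjacent vertices, weighted to exploit the $(x_p - x_j)^2$ structure), compute $x^t L x$ and $x^t x$ explicitly via Lemma~\ref{lemma:1}, and verify the ratio is at least $\Delta + 1$. If the all-neighbors choice introduces weight-dependent slack, the fallback is to restrict the perturbation to one consecutively adjacent vertex $v_j$ of $v$ lying in a full edge, where $\mu = 1$ is guaranteed, reducing the computation locally to the graph case and yielding the bound cleanly; existence of such a neighbor follows since $G$ has at least one edge and a maximum-degree vertex must lie in some edge.
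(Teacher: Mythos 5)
Your overall strategy --- exhibiting a test vector and using $\lambda_n \geq x^tLx/x^tx$ --- is a legitimate route, but it is not the paper's (the paper takes a Cholesky factorization $L=TT^t$, computes the first diagonal entry of $T^tT$ as $\tfrac{1}{d_1}\sum_i l_{i1}^2 = d_1+\tfrac{1}{d_1}\sum_{i\geq 2} l_{i1}^2$, bounds this below by $d_1+1$ via $\sum_{i\geq2}l_{i1}^2\geq\sum_{i\geq2}|l_{i1}|=d_1$, and invokes ``largest eigenvalue $\geq$ largest diagonal entry''; in Rayleigh-quotient language it is testing at $L^{1/2}e_1$, i.e.\ using $\lambda_n\geq e_1^tL^2e_1/e_1^tLe_1$ rather than $x^tLx/x^tx$). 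More importantly, your proposal has a genuine gap: you correctly identify that the nonuniform weights are the obstacle, but none of the three candidate vectors you float is shown to overcome it, and the one you actually commit to as the fallback provably cannot.

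Concretely: for a vector supported on the maximum-degree vertex $v_p$ and a single consecutively adjacent neighbour $v_j$ with $a_{pj}=1$, the quadratic form is exactly $\Delta\alpha^2+2\alpha\beta+d_j\beta^2$ against $\alpha^2+\beta^2$, so the best achievable bound is the largest eigenvalue of $\bigl(\begin{smallmatrix}\Delta&1\\ 1&d_j\end{smallmatrix}\bigr)$, namely $\tfrac{1}{2}\bigl(\Delta+d_j+\sqrt{(\Delta-d_j)^2+4}\bigr)$, and this is $\geq\Delta+1$ if and only if $d_j\geq\Delta$; since $\Delta$ is the maximum degree, the two-vertex vector fails whenever the chosen neighbour does not also have maximum degree. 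This is already true for ordinary graphs --- the classical argument needs all $\Delta$ neighbours, so ``reducing locally to the graph case'' with one neighbour does not yield the bound cleanly. The all-neighbours choice $x_p=\Delta$, $x_j=-1$ gives $x^tLx/x^tx\geq\Delta(\Delta+1)^2/(\Delta^2+N)$ with $N$ the number of semigraph-neighbours of $v_p$, which is $\geq\Delta+1$ precisely when $N\leq\Delta$; this holds when every weight $a_{pj}$ is at least $1$ but can fail when $v_p$ meets partial half edges or quarter edges (weights $\tfrac12$, $\tfrac14$) --- exactly the ``weight-dependent slack'' you flag and then defer to the failing fallback. The proportional choice $x_j=-\beta a_{pj}$ reduces to an inequality among the power sums $\sum_j a_{pj}^k$ that is likewise not automatic. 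To close the argument along your lines you would need the quotient $e_p^tL^2e_p/e_p^tLe_p$ (equivalently the paper's Cholesky computation) together with an explicit justification of $\sum_j a_{pj}^2\geq\sum_j a_{pj}$ for the maximum-degree row --- a weight inequality that the paper itself asserts without proof and that is the real crux in the semigraph setting.
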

 \begin{proof}
 As $L=(l_{ij})$ is positive semi-definite, by Cholsky decomposition $L=T^tT$, where $T=(t_{ij})$ is the lower triangular matrix with non-negative diagonal entries. WLOG, assume that $d_1$, degree of $v_1$ is the largest degree. Thus, by comparison we get $d_1=l_{11}=t^2_{11} \implies t_{11}=\sqrt{d_1}.$ Comparing the entries of the first columns we get $l_{i1}=t_{11}t_{i1}\; \forall i=1,2,\cdots, n.$ Hence, $$l_{i1}=\sqrt{d_1}\;t_{i1}\;\cdots (1)$$
 Now, the first diagonal entry of $T^tT$ equal is to 
 \begin{align*}
 \displaystyle \sum_{i=1}^{n}t^2_{i1}=&\;\sum_{i=1}^{n}\left(\frac{l_{i1}}{\sqrt{d_1}}\right)^2\\
 =&\; \frac{1}{d_1}\sum_{i=1}^{n}l^2_{i1}\\
 \sum_{i=1}^{n}t^2_{i1} =&\;\frac{1}{d_1}\left(d^2_1+\sum_{i=2}^{n}l^2_{i1}\right)\\
 =&\; d_1+\frac{1}{d_1}\sum_{i=2}^{n}l^2_{i1}
 \end{align*}
Note that $\displaystyle \sum_{i=2}^{n}l^2_{i1} \geq \sum_{i=2}^{n}|l_{i1}|$ (Here, we get equality when $G$ is a graph).
$$\implies \sum_{i=1}^{n}t^2_{i1} \geq d_1+\frac{1}{d_1}\sum_{i=2}^{n}|l_{i1}|$$
Note that $\displaystyle \sum_{i=2}^{n}|l_{i1}|$ is degree $d_1$ of the first vertex $v_1$.
Thus, we have $$\sum_{i=1}^{n}t^2_{i1} \geq d_1+1$$
The largest eigenvalue of $T^tT$ exceeds or equals the largest diagonal entry of $T^tT$; hence it exceeds or equals $d_1+1$. As eigenvalues of $L,\; T^tT,\; TT^t$ are same implies $\lambda_n \geq \Delta +1$, where $d_1=\Delta$.
 \end{proof} 
 \begin{remark}
 The proof is similar to the proof of \cite[Theorem 4.12]{rbp}. In fact, when $G$ is a graph, we get the graph theory result as a consequence of the above proof.  
 \end{remark}
 If $i^{th}$ and $j^{th}$ vertices make a partial edge, we denote it by $j \sim_{S} i$, and if they make a partial half edge, then we denote it by $j \sim_{|-} i$. If $i^{th}$ and $j^{th}$ vertices make a quarter edge then we denote it by $j \sim_{|-|} i$, and if $d_{e}(i, j)=l$ for some edge then we denote it by $j \sim_{l} i$. Let $d^S_i$ is the degree contribution to degree of $i^{th}$ vertex due to all partial edges incident to it, $d^{\frac{1}{2}}_i$ is the degree contribution to degree of $i^{th}$ vertex due to all partial half edges incident to it, $d^{\frac{1}{4}}_i$ is the degree contribution to degree of $i^{th}$ vertex due to all quarter edges incident to it, $d^{l}_i$ is the degree contribution to degree of $i^{th}$ vertex due to all adjacent vertices which are $l$ distance apart from it. Note that $\displaystyle d_i= d^S_i+d^{\frac{1}{2}}_i+d^{\frac{1}{4}}_i+\sum_{l=2}^{r-1} d^{l}_i$.
 
Let $C_S(i, j)$ be the number of vertices that are consecutively adjacent to both $i^{th}$ and $j^{th}$ vertices and form partial edges, $C_{\frac{1}{2}}(i, j)$ is the number of  vertices that are consecutively adjacent to both $i^{th}$ and $j^{th}$ vertices and form partial half edges,  $C_{\frac{1}{4}}(i, j)$ is the number of  vertices that are consecutively adjacent to both $i^{th}$ and $j^{th}$ vertices and form quarter edges, $C_l(i, j)$ is the number of vertices that are adjacent to both $i^{th}$ and $j^{th}$ vertices and are $l$ distance apart from each of them. Let $\displaystyle C(i, j)=C_S(i,j)+C_{\frac{1}{2}}(i,j)+C_{\frac{1}{4}}(i,j)+\sum_{l=2}^{r-1}lC_{l}(i,j)$. 
\vskip 1cm

 \begin{theorem}
Let $G=(V, E)$ be a semigraph with at least one edge. Let $0= \lambda_1 \leq \lambda_2 \leq \cdots \leq \lambda_n$ then  
$$\lambda_n \leq max\{d_i+d_j -C(i,j)\}$$
\end{theorem}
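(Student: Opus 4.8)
The plan is to mirror the classical graph-theoretic argument behind the Anderson--Morley/Das bound $\lambda_n\le\max_{uv\in E}\{d_u+d_v-|N(u)\cap N(v)|\}$, adapting it to the weighted off-diagonal entries that the semigraph adjacency matrix carries. The starting point is the eigenvalue equation for the top eigenpair: let $x=(x_1,\dots,x_n)^t\neq 0$ satisfy $Lx=\lambda_n x$, so that for every vertex $v_k$,
\[(\lambda_n-d_k)x_k=-\sum_{w}a_{kw}x_w,\]
where the sum runs over the vertices $w$ adjacent to $v_k$ and $a_{kw}\in\{l,\tfrac12,\tfrac14\}$ is the corresponding entry of $A$ from Definition~\ref{def:4}. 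Since $\lambda_1=0$ has eigenvector $\mathbf 1$ and $\lambda_n>0$ (the semigraph has an edge), $x$ may be taken orthogonal to $\mathbf 1$, hence non-constant.

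Next I would isolate the critical adjacent pair. Let $v_p$ be a vertex with $x_p=\max_k x_k$, and let $v_q$ be a vertex adjacent to $v_p$ (so that $\{v_p,v_q\}$ lies in a common edge, making $(p,q)$ an honest semigraph-adjacent pair) for which $x_q$ is smallest among the neighbours of $v_p$. Subtracting the eigen-equations at $p$ and $q$ gives
\[\lambda_n(x_p-x_q)=d_px_p-d_qx_q-\sum_{w}(a_{pw}-a_{qw})x_w.\]
The decisive feature is that a common neighbour $w$ of $v_p$ and $v_q$ that is equidistant from both (distance $l$ in a full edge, or a matching partial/partial-half/quarter configuration on each side) satisfies $a_{pw}=a_{qw}$ and therefore drops out of the last sum; this is the exact analogue of the cancellation that, in the graph case, upgrades $d_p+d_q$ to $d_p+d_q-|N(p)\cap N(q)|$. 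Splitting the surviving sum into the neighbours of $v_p$ not adjacent to $v_q$ and vice versa, and using $x_w\le x_p$ everywhere together with $x_w\ge x_q$ for every neighbour of $v_p$, I would bound the right-hand side by $\bigl(d_p+d_q-C(p,q)\bigr)(x_p-x_q)$, where $C(p,q)$ is the correction arising from the common neighbours of $v_p$ and $v_q$. Provided $x_p>x_q$ one divides by $x_p-x_q>0$ to obtain $\lambda_n\le d_p+d_q-C(p,q)\le\max\{d_i+d_j-C(i,j)\}$; the degenerate case $x_p=x_q$ (every neighbour of $v_p$ attaining the maximum) is handled separately by propagating the equality through the connected semigraph.

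The hard part will be the weighted common-neighbour bookkeeping, namely verifying that the aggregate saving produced by the cancellation together with the degree splitting is \emph{exactly} $C(i,j)=C_S(i,j)+C_{\frac12}(i,j)+C_{\frac14}(i,j)+\sum_{l=2}^{r-1}lC_l(i,j)$ and not some other weighting. This forces a case analysis over the three edge types (full, half, quarter) and over all distances $l$, keeping track of which positions of $w$ relative to $v_p,v_q$ are genuinely equidistant and of the precise entry $a_{pw}$ each contributes, while controlling the signs of the one-sided sums in the fractional-weight cases. I would finish with the consistency check the paper favours: when $m_2=m_3=m_4=0$ all weights collapse to $1$, $C(i,j)$ reduces to $|N(i)\cap N(j)|$, and the estimate recovers the classical graph bound.
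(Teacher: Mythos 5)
Your proposal follows essentially the same route as the paper: choose $i$ maximizing $x_i$ and $j$ minimizing $x_j$ over the semigraph-neighbours of $i$, subtract the two eigenvalue equations so that the equidistant common-neighbour terms cancel, bound the surviving one-sided sums using $x_k\ge x_j$ for neighbours of $i$ and $x_k\le x_i$ globally, and divide by $x_i-x_j>0$. The ``hard part'' you flag (verifying that the cancellation and degree-splitting yield exactly the weighting $C(i,j)=C_S+C_{\frac12}+C_{\frac14}+\sum_l lC_l$) is precisely the type-by-type bookkeeping the paper carries out, so your outline matches its proof.
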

 \begin{proof}
 Let $\lambda_n$ be the largest eigenvalue. Let $x=(x_1,x_2,\cdots, x_n)^t\in \mathbb{R}^n$ be an eigenvector such that $Lx=\lambda_nx$. We choose $i$ such that $\displaystyle x_i=\underset{\;\;\;k}max\; x_k$. Further, we choose $j$ such that $\displaystyle x_j=\underset{\;\;\;k}min\{x_k: k\sim i\}.$ Here adjacency is the semigraph adjacency. Comparing the $i^{th}$, $j^{th}$ components of $(D-A)x=\lambda_nx$, we get
 $$\displaystyle \lambda_nx_i=d_ix_i-\sum_{k \sim_{s} i}x_{k} -\frac{1}{2}\sum_{k \sim_{|-} i}x_{k} -\frac{1}{4}\sum_{k \sim_{|-|} i}x_{k} -\sum_{l=2}^{r-1}\sum_{k \sim_{l} i}lx_{k}$$
 and
  $$\displaystyle \lambda_nx_j=d_jx_j-\sum_{k \sim_{s} j}x_{k} -\frac{1}{2}\sum_{k \sim_{|-} j}x_{k} -\frac{1}{4}\sum_{k \sim_{|-|} j}x_{k} -\sum_{l=2}^{r-1}\sum_{k \sim_{l} j}lx_{k}$$
  Rewriting the above two equations, we get
  
   \begin{align*} 
  \displaystyle \lambda_nx_i=&\;d_ix_i-\sum_{k \sim_{s} i; \;k\sim_{s} j}x_{k}-\sum_{k \sim_{s} i; \;k\nsim_{s} j}x_{k} -\frac{1}{2}\sum_{k \sim_{|-} i;\;k\sim_{|-}j}x_{k}-\frac{1}{2}\sum_{k \sim_{|-} i;\;k\nsim_{|-}j}x_{k}\\
  &  -\frac{1}{4}\sum_{k \sim_{|-|} i;\;k\sim_{|-|}j} x_{k}-\frac{1}{4}\sum_{k \sim_{|-|} i;\;k\nsim_{|-|}j} x_{k} -\sum_{l=2}^{r-1}\sum_{k \sim_{l} i;\;k\sim_{l}j}lx_{k}-\sum_{l=2}^{r-1}\sum_{k \sim_{l} i;\;k\nsim_{l}j}lx_{k}
  \end{align*}
  and
  \begin{align*} 
  \displaystyle \lambda_nx_j=&\;d_jx_j-\sum_{k \sim_{s} j; \;k\sim_{s} i}x_{k}-\sum_{k \sim_{s} j; \;k\nsim_{s} i}x_{k} -\frac{1}{2}\sum_{k \sim_{|-} j;\;k\sim_{|-}i}x_{k}-\frac{1}{2}\sum_{k \sim_{|-} j;\;k\nsim_{|-}i}x_{k}\\
  &  -\frac{1}{4}\sum_{k \sim_{|-|} j;\;k\sim_{|-|}i} x_{k}-\frac{1}{4}\sum_{k \sim_{|-|} j;\;k\nsim_{|-|}i} x_{k} -\sum_{l=2}^{r-1}\sum_{k \sim_{l} j;\;k\sim_{l}i}lx_{k}-\sum_{l=2}^{r-1}\sum_{k \sim_{l} j;\;k\nsim_{l}i}lx_{k}
  \end{align*}
Subtract the second equation from the first
 \begin{align*} 
  \displaystyle \lambda_n(x_i- x_j)=&d_ix_i-\;d_jx_j-\sum_{k \sim_{s} i; \;k\nsim_{s} j}x_{k}+\sum_{k \sim_{s} j; \;k\nsim_{s} i}x_{k} -\frac{1}{2}\sum_{k \sim_{|-} i;\;k\nsim_{|-}j}x_{k}+\frac{1}{2}\sum_{k \sim_{|-} j;\;k\nsim_{|-}i}x_{k}\\
  &  -\frac{1}{4}\sum_{k \sim_{|-|} i;\;k\nsim_{|-|}j} x_{k}+\frac{1}{4}\sum_{k \sim_{|-|} j;\;k\nsim_{|-|}i} x_{k} -\sum_{l=2}^{r-1}\sum_{k \sim_{l} i;\;k\nsim_{l}j}lx_{k}+\sum_{l=2}^{r-1}\sum_{k \sim_{l} j;\;k\nsim_{l}i}lx_{k} \\
 \leq &\;\;d_ix_i-d_jx_j- \left(d^S_i-C_S(i,j)\right)x_j+\left(d^S_j-C_S(i,j)\right)x_i-\left(d^{\frac{1}{2}}_i-\frac{1}{2}C_{\frac{1}{2}}(i,j)\right)x_j\\
 &+\left(d^{\frac{1}{2}}_j-\frac{1}{2}C_{\frac{1}{2}}(i,j)\right)x_i-\left(d^{\frac{1}{4}}_i-\frac{1}{4}C_{\frac{1}{4}}(i,j)\right)x_j+\left(d^{\frac{1}{4}}_j-\frac{1}{4}C_{\frac{1}{4}}(i,j)\right)x_i\\
 &-\left(\sum_{l=2}^{r-1}d^{l}_j-lC_{l}(i,j)\right)x_j+\left(\sum_{l=2}^{r-1}d^{l}_i-lC_l(i,j)\right)x_i 
 \end{align*}
 By simplifying and rewriting it, we get
  \begin{align*} 
  \displaystyle \lambda_n(x_i- x_j)
 \leq &\;\;(d_i+d_j)(x_i-x_j)- C_S(i,j)(x_i-x_j)-\frac{1}{2}C_{\frac{1}{2}}(i,j)(x_i-x_j)\\
 &-\frac{1}{4}C_{\frac{1}{4}}(i,j)(x_i-x_j)-\left(\sum_{l=2}^{r-1}lC_{l}(i,j)\right)(x_i-x_j). 
 \end{align*}
 Thus, we get
 $$\lambda_n(x_i- x_j)\leq \;\; (d_i+d_j -C(i,j))(x_i-x_j)$$
 Note that $\lambda_n$ is not $0$ as our graph is connected having at least of one edge. Thus, there is some $j$ such that $j^{th}$ vertex is adjacent to $i^{th}$ vertex and $x_i>x_j$. Hence, $\lambda_n \leq (d_i+d_j-C(i,j))$ 
\end{proof}
\begin{remark}
 The proof is similar to the proof of \cite[Theorem 4.13]{rbp}, and if $G$ is a graph, we get the graph theory result as the consequence of above result. 
 \end{remark}

 \section{Laplacian Eigenvalues of stars}
 In this section, we define different types of star semigraphs and study their Laplacian eigenvalues. Consider a semigraph $S^3_{2, n}$ on $n+3$ vertices; all edges are of size two except one edge, which has three vertices, as shown in Fig. \ref{fig:2}
\begin{figure}[h]
\centering
  \begin{tikzpicture}[yscale=0.5]
 \Vertex[size=0.2, y=3, label=$v_2$, position=above, color=black]{B} 
 \Vertex[size=0.2,  label=$v_1$, position=180, color=none]{A} 
 \Vertex[size=0.2,y=-3,  label=$v_3$, position=below, color=black]{C} 
 \Edge(A)(B) \Edge(A)(C)
  \Vertex[size=0.2,x=1.1, y=2.5,  label=$v_4$, position=45, color=black]{D} 
    \Vertex[size=0.2,x=1.3, y=-3, label=$v_r$, position=right, color=black]{E} 
  \Vertex[size=0.2,x=-1.6, y=2.2, label=$v_{n+3}$, position=left, color=black]{F} 
  \Vertex[size=0.2,x=-1.2, y=-3.15, label=$v_{r+1}$, position=left, color=black]{G} 
   \draw[thick](0.2,0.2)--(1.1,2.45);
    \draw[thick](0.15,-0.3)--(1.15,-2.7);
    \draw[thick](-0.2,0.3)--(-1.5, 2.1);
    \draw[thick](-0.2,-0.3)--(-1.2,-3.1);
    \draw[thick](0.2, 0)--(0.2, 0.5);
   \draw[thick](0.15, -0.15)--(0.15, -0.6);
    \draw[thick](-0.2, 0)--(-0.2, 0.5);
   \draw[thick](-0.2, -0.1)--(-0.15, -0.6);

  \Edge[bend =20, style={dashed}](D)(E)
   \Edge[bend =20, style={dashed}](G)(F)

\end{tikzpicture}\\
$\text{Star semigraphs:}\;\;S^3_{2,n} $
\caption{ }
\label{fig:2}
\end{figure}

 The Laplacian $L$ is as follows: 
  \[
     \bordermatrix{ & {v_1} & {v_2} & {v_3}& {v_4} & {v_5} & \cdots & {v_{n+3}} \cr
       v_1 & \frac{(n+4)}{2}&-1&-1&-\frac{1}{2}&-\frac{1}{2}&\cdots&-\frac{1}{2} \cr
       v_2 & -1&3&-2&0&0&\cdots&0\cr
       v_3 & -1&-2&3&0&0&\cdots&0 \cr
        v_4 & -\frac{1}{2}&0&0&\frac{1}{2}&0&\cdots&0 \cr
       v_5 & -\frac{1}{2}&0&0&0&\frac{1}{2}&\cdots&0 \cr
       \vdots & \vdots&\vdots&\vdots&\vdots&\vdots&\ddots&\vdots\cr
       v_{n+3} &-\frac{1}{2}&0&0&0&0&\cdots&\frac{1}{2}} \qquad
 \]

 \begin{lemma}
The Laplacian spectrum of star semigraph $S^3_{2,n}$ is:
$$\begin{pmatrix}0&0.5&\lambda_1& \lambda_2& \lambda_3 \\ 1&n-1&1&1&1\end{pmatrix}$$ 
 where $\lambda_{1}, \lambda_{2},\lambda_{3}$ are roots of the cubic polynomial $\lambda^3 -\frac{n+17}{2}\;\lambda^2+(19+3n)\lambda -\frac{5n+15}{2}.$
\end{lemma}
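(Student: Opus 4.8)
The plan is to exploit the symmetry of $S^3_{2,n}$: the $n$ vertices $v_4,\dots,v_{n+3}$ are pairwise interchangeable, each joined to $v_1$ by a single partial half edge. This symmetry splits $\mathbb{R}^{n+3}$ into two $L$-invariant subspaces and collapses the entire spectral computation to one $4\times 4$ characteristic polynomial.

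First I would read off the large eigenspace directly. Let $A\subseteq\mathbb{R}^{n+3}$ consist of the vectors supported on the coordinates $v_4,\dots,v_{n+3}$ whose entries sum to zero, so $\dim A=n-1$. For such an $x$, the displayed form of $L$ shows that rows $1,2,3$ of $Lx$ vanish (row $1$ because $\sum_{k\ge4}x_k=0$), while row $k\ge4$ gives $-\tfrac12 x_1+\tfrac12 x_k=\tfrac12 x_k$; hence $Lx=\tfrac12 x$. Thus every vector in $A$ is an eigenvector for $\tfrac12$, yielding the eigenvalue $0.5$ with multiplicity $n-1$. Independently, $L\mathbf 1=0$ since every row sum is zero, and because $G$ is connected the preceding theorem forces the multiplicity of the eigenvalue $0$ to be exactly $1$.

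These account for $n$ of the $n+3$ eigenvalues; the remaining three lie in the symmetric subspace $S=\operatorname{span}\{e_1,e_2,e_3,u\}$ with $u=e_4+\cdots+e_{n+3}$, which is $L$-invariant and complementary to $A$. Evaluating $Le_1,Le_2,Le_3,Lu$ and re-expressing them in the basis $\{e_1,e_2,e_3,u\}$, the restriction $L|_S$ is represented by
$$M=\begin{pmatrix}\tfrac{n+4}{2}&-1&-1&-\tfrac{n}{2}\\-1&3&-2&0\\-1&-2&3&0\\-\tfrac12&0&0&\tfrac12\end{pmatrix},$$
whose four eigenvalues are precisely the eigenvalues of $L$ sitting in $S$. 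Since $\mathbf 1\in S$, the vector $(1,1,1,1)^t$ lies in $\ker M$, so $\det(\lambda I-M)=\lambda\,p(\lambda)$ for a monic cubic $p$. I would then expand this $4\times4$ determinant, cancel the factor $\lambda$, and identify $p(\lambda)=\lambda^3-\tfrac{n+17}{2}\lambda^2+(19+3n)\lambda-\tfrac{5n+15}{2}$; its three roots $\lambda_1,\lambda_2,\lambda_3$ are the remaining eigenvalues. The three coefficients can be cross-checked against the principal minors of $M$: $\operatorname{tr}M=\tfrac{n+17}{2}$, the sum of its $2\times2$ principal minors is $3n+19$, and the sum of its $3\times3$ principal minors is $\tfrac{5n+15}{2}$.

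The main obstacle is organizational rather than conceptual: one must verify carefully that $\mathbb{R}^{n+3}=S\oplus A$ is an honest decomposition into $L$-invariant subspaces, so that the multiset of eigenvalues of $L$ is the union of $\{0.5\}$ (repeated $n-1$ times) with the spectrum of $M$, and then push the $4\times4$ cofactor expansion through without arithmetic error. Finally, to confirm the stated multiplicities are exact I would check that neither $0$ nor $0.5$ is a root of $p$, using $p(0)=-\tfrac{5n+15}{2}\neq0$ and $p(\tfrac12)=-\tfrac{9n}{8}\neq0$.
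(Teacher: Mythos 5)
Your argument is correct, and it reaches the stated spectrum by a genuinely different route than the paper. The paper computes $P_n(\lambda)=\det(\lambda I-L)$ head-on by cofactor expansion along the first column of the full $(n+3)\times(n+3)$ determinant and then factors out $\lambda(\lambda-\tfrac12)^{n-1}$; you instead exploit the interchangeability of $v_4,\dots,v_{n+3}$ to split $\mathbb{R}^{n+3}=S\oplus A$ into $L$-invariant pieces, read off $L|_A=\tfrac12 I$ on the $(n-1)$-dimensional "difference" space, and reduce the rest to the $4\times4$ quotient matrix $M$. I checked your $M$ against the displayed Laplacian (columns $Le_1,Le_2,Le_3,Lu$ expressed in $\{e_1,e_2,e_3,u\}$) and it is right, as are your cross-checks: $\operatorname{tr}M=\tfrac{n+17}{2}$, the sum of $2\times2$ principal minors is $3n+19$, the sum of $3\times3$ principal minors is $\tfrac{5n}{2}+3\cdot\tfrac52=\tfrac{5n+15}{2}$, and $\det M=0$ via $M\mathbf 1=0$, which pins down $p(\lambda)$ without ever expanding a determinant of size depending on $n$. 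Your approach buys a structural explanation of the multiplicity $n-1$ of the eigenvalue $\tfrac12$ and replaces the paper's somewhat error-prone parametric cofactor expansion with a fixed-size computation plus the exactness checks $p(0)\neq0$ and $p(\tfrac12)=-\tfrac{9n}{8}\neq0$ (which the paper does not bother to make explicit); the paper's computation, in exchange, delivers the full characteristic polynomial in one closed formula. The only point to be careful about, which you already flag, is that $M$ is not symmetric because $\{e_1,e_2,e_3,u\}$ is not orthonormal, so you should justify that the characteristic polynomial of $L$ is the product of those of $L|_S$ and $L|_A$ for any direct-sum decomposition into invariant subspaces; that is standard and does not affect correctness.
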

\begin{proof}
Let $P_{n}(\lambda) =det(\lambda I -L)$ be the characteristic polynomial of $S^3_{2,n}$.
$$P_{n}(\lambda)=\begin{vmatrix}
\lambda-\frac{n+4}{2}&1&1&\frac{1}{2}&\cdots&\frac{1}{2}&\frac{1}{2} \\
1&\lambda-3&2&0&\cdots&0&0 \\
1&2&\lambda-3&0&\cdots&0&0\\
\frac{1}{2}&0&0&\lambda-\frac{1}{2}&\cdots&0&0 \\
 \vdots&\vdots&\vdots&\vdots&\ddots&\vdots&\vdots  \\ 
 \frac{1}{2}&0&0&0&\cdots&\lambda-\frac{1}{2}&0\\
 \frac{1}{2}&0&0&0&\cdots&0&\lambda-\frac{1}{2}
\end{vmatrix} $$

Using cofactor expansion along the first column, we get

\begin{align*}
P_{n}(\lambda)=&\left(\lambda-\frac{1}{2}\right)^n\left[\left((\lambda-3)^2-4\right)\left(\lambda-\frac{n+4}{2}\right)\right]-2\left(\lambda-\frac{1}{2}\right)^n(\lambda-5)\\
&-\frac{n}{4}\left(\lambda-\frac{1}{2}\right)^{n-1}\left[(\lambda-3)^2-4\right]
\end{align*}
$$P_{n}(\lambda)=\left(\lambda-\frac{1}{2}\right)^{n-1}\left[\left(\lambda^2-6\lambda +5\right)\left[\left(\lambda-\frac{1}{2}\right)\left(\lambda-\frac{n+4}{2}\right)-\frac{n}{4}\right]-2\left(\lambda-\frac{1}{2}\right)(\lambda-5)\right]$$
Simplifying it further, we get
$$P_{n}(\lambda)=\displaystyle \lambda \left(\lambda-\frac{1}{2}\right)^{n-1}\left(\lambda^3 -\frac{n+17}{2}\;\lambda^2+(19+3n)\lambda -\frac{5n+15}{2}\right)$$

Thus, the Laplacian spectra of star semigraphs are:$\left\{0, \frac{1}{2}\text{(n-1 times)},\;\lambda_{1}, \lambda_{2},\lambda_{3}\right\};$ where $\lambda_{1}, \lambda_{2},\lambda_{3}$ are roots of the cubic polynomial $$\lambda^3 -\frac{n+17}{2}\;\lambda^2+(19+3n)\lambda -\frac{5n+15}{2}.$$
\end{proof}

 \subsection{Laplacian eigenvalues of rooted 3-uniform semigraph tree}
 Here, we compute the Laplacian eigenvalues of the rooted 3-uniform semigraph tree. Let $T^3_n$ denote the semigraph on $2n+1$ vertices with $n$ edges. The edge set $E$ is given by $\{(v_1, v_{2i},v_{2i+1})\; \big{|}\; 1\leq i\leq n \}$
\begin{figure}[h]
\centering
  \begin{tikzpicture}[yscale=0.5]
 \Vertex[size=0.2, label=$v_1$, position=above, color=black]{A} 
  \Vertex[size=0.2, x=1, y=-1,label=$v_{2n}$, position=right, color=none]{B} 
   \Vertex[size=0.2, x=2, y=-2,label=$v_{2n+1}$, position=below, color=black]{C}
   \Vertex[size=0.2, x=-1, y=-1,label=$v_2$, position=left, color=none]{D} 
   \Vertex[size=0.2, x=-2, y=-2,label=$v_3$, position=below, color=black]{E}
  \Vertex[size=0.2, x=0, y=-2,label=$v_{2r}$, position=left, color=none]{F} 
   \Vertex[size=0.2, x=0, y=-4,label=$v_{2r+1}$, position=below, color=black]{G} 
   \Edge(A)(D) \Edge(D)(E) \Edge(A)(F) \Edge(F)(G) \Edge(A)(B) \Edge(B)(C)
  \Edge[bend =-20, style={dashed}](E)(G)
  \Edge[bend =-20, style={dashed}](G)(C)
\end{tikzpicture}\\
$T^3_{n} $ 
\caption{}
\label{fig:}
\end{figure}

The Laplacian matrix $L$ of 3-uniform tree: $T^{3}_{n}$ is
\[
     \bordermatrix{ & {v_1} & {v_2} & {v_3}& {v_4} & {v_5}  & \cdots  & {v_{2n-2}}& {v_{2n-1}} &{v_{2n}}& {v_{2n+1}} \cr
       v_1 & 3n&1&2&1&2&\cdots&1&2&1&2 \cr
       v_2 & 1&2&1&0&0&\cdots&0&0&0&0\cr
       v_3 & 2&1&3&0&0&\cdots&0&0&0&0 \cr
        v_4 & 1&0&0&2&1&\cdots&0&0&0&0 \cr
       v_5 & 2&0&0&1&3&\cdots&0&0&0&0 \cr
   \vdots & \vdots&\vdots&\vdots&\vdots&\vdots&\ddots&\vdots&\vdots&\vdots&\vdots\cr
       v_{2n-2} &1&0&0&0&0&\cdots&2&1&0&0 \cr
        v_{2n-1} &2&0&0&0&0&\cdots&1&3&0&0 \cr
       v_{2n} &1&0&0&0&0&\cdots&0&0&2&1 \cr
       v_{2n+1} &2&0&0&0&0&\cdots&0&0&1&3} \qquad
 \]

\begin{lemma}
The Laplacian spectrum of star semigraph $T^3_{ n}$ is:
$$\begin{pmatrix}0&\frac{5\pm\sqrt{5}}{2}&\lambda_1& \lambda_2 \\ 1&n-1&1&1\end{pmatrix}$$ 
 where $\lambda_{1}, \lambda_{2}$ are roots of the quadratic polynomial $\lambda^2-(3n+5)\lambda+10n+5.$
\end{lemma}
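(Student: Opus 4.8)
The plan is to exploit the $S_n$ symmetry of $T^3_n$ rather than expand the characteristic determinant directly. Writing the hub vertex $v_1$ first and grouping the remaining vertices into the $n$ identical arms $\{v_{2i}, v_{2i+1}\}$, the Laplacian takes a bordered block form in which every arm contributes the same $2\times 2$ block $M = \left(\begin{smallmatrix} 2 & -1 \\ -1 & 3\end{smallmatrix}\right)$ on its diagonal and the same coupling vector $b = (-1, -2)^t$ to the hub, with no coupling between distinct arms. The symmetric group permuting the arms commutes with $L$, so I would diagonalize by decomposing $\mathbb{R}^{2n+1}$ into the isotypic components of this action.

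First I would treat the ``standard representation'' part: vectors with zero $v_1$-component whose arm-vectors are $y_i = c_i w$ for a fixed eigenvector $w$ of $M$ and scalars with $\sum_i c_i = 0$. The hub equation reads $0 = (b^t w)\sum_i c_i$, which holds automatically, while each arm equation reduces to $My_i = \lambda y_i$; hence every such vector is an eigenvector whose eigenvalue is an eigenvalue of $M$. The eigenvalues of $M$ are the roots of $\mu^2 - 5\mu + 5$, namely $\frac{5\pm\sqrt5}{2}$, and the sum-zero condition supplies an $(n-1)$-dimensional space of coefficient vectors for each, giving each of $\frac{5\pm\sqrt5}{2}$ with multiplicity $n-1$.

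Next I would handle the ``trivial representation'' part: the three-dimensional invariant subspace spanned by $f_0 = e_{v_1}$, $f_1 = \sum_i e_{v_{2i}}$, and $f_2 = \sum_i e_{v_{2i+1}}$. Computing $Lf_0, Lf_1, Lf_2$ expresses the restriction of $L$ as the $3\times 3$ matrix $\tilde L = \left(\begin{smallmatrix} 3n & -n & -2n \\ -1 & 2 & -1 \\ -2 & -1 & 3 \end{smallmatrix}\right)$, whose characteristic polynomial factors as $\lambda\bigl(\lambda^2 - (3n+5)\lambda + 10n+5\bigr)$; the factor $\lambda$ corresponds to the all-ones eigenvector (eigenvalue $0$), and the quadratic yields $\lambda_1, \lambda_2$. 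A dimension count then confirms completeness: $1 + 2(n-1) + 2 = 2n+1$.

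The computations here are routine, so the only real care is conceptual bookkeeping. The step I expect to need the most attention is confirming that the arm eigenvalues $\frac{5\pm\sqrt5}{2}$ are genuinely distinct from the roots $\lambda_1,\lambda_2$ of the quadratic, so that the stated multiplicities are exact rather than merged; a quick substitution of $\mu = \frac{5\pm\sqrt5}{2}$ into $\lambda^2 - (3n+5)\lambda + 10n+5$ yields $\frac{n}{2}(5\mp 3\sqrt5)\neq 0$ for all $n\geq 1$, settling this. The secondary point to verify is that $b^t w \neq 0$, so the standard-representation vectors actually decouple from the hub as claimed; this follows since $b^t w = (2\mu - 5)w_1 = \pm\sqrt5\, w_1 \neq 0$.
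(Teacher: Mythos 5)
Your proof is correct, but it takes a genuinely different route from the paper. The paper computes the characteristic polynomial $\det(\lambda I - L)$ head-on by cofactor expansion along the first column, collects the $n$ identical arm contributions, and factors the result as $\lambda\left(\lambda^2-5\lambda+5\right)^{n-1}\left(\lambda^2-(3n+5)\lambda+10n+5\right)$, reading the spectrum off the factorization. You instead exploit the $S_n$-symmetry permuting the arms: sum-zero combinations of the eigenvectors of the arm block $M=\left(\begin{smallmatrix}2&-1\\-1&3\end{smallmatrix}\right)$ produce $\frac{5\pm\sqrt5}{2}$ each with multiplicity $n-1$, the symmetric $3$-dimensional subspace carries the quotient matrix $\tilde L$ with characteristic polynomial $\lambda\bigl(\lambda^2-(3n+5)\lambda+10n+5\bigr)$, and the count $1+2(n-1)+2=2n+1$ closes the argument; all of your intermediate matrices and polynomials check out. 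Your route buys explicit eigenvectors and avoids the determinant bookkeeping, at the cost of the distinctness verification, which you correctly carry out for $\frac{5\pm\sqrt5}{2}$ versus $\lambda_1,\lambda_2$; you should add the one-line observation that $0$ is also not a root of $\lambda^2-(3n+5)\lambda+10n+5$ (its constant term is $10n+5>0$), so no merging occurs there either. The check that $b^tw\neq 0$ is superfluous: the hub equation holds because $\sum_i c_i=0$ irrespective of $b^tw$, and exactness of the multiplicities already follows from having a full set of $2n+1$ independent eigenvectors together with distinctness of the five values. (Incidentally, the displayed Laplacian of $T^3_n$ in the paper has its off-diagonal signs flipped; your $M$ and $b$ carry the correct signs, consistent with the paper's determinant $\det(\lambda I-L)$.)
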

\begin{proof}
Let $T_{n}(\lambda)  =det(\lambda I -L)$ be the characteristic polynomial of $T^3_{ n}$.
$$T_{n}(\lambda)=\begin{vmatrix}
\lambda-3n&1&2&1&2&\cdots& 1&2&1&2 \\
1&\lambda-2&1&0&0&\cdots & 0&0&0&0 \\
2&1&\lambda-3&0&0 &\cdots & 0&0&0&0 \\
1&0&0&\lambda-2&1 &\cdots &0&0&0&0 \\
2&0&0&1&\lambda-3 & \cdots &0&0&0&0 \\
&&&\vdots&&\vdots&&&& \\
1&0&0&0&0 & \cdots & \lambda-2&1&0&0 \\
2&0&0&0&0 & \cdots &1&\lambda-3&0&0 \\
1&0&0&0&0&\cdots& 0&0&\lambda-2&1 \\
2&0&0&0&0&\cdots&0&0&1&\lambda-3
\end{vmatrix} $$
Using co-factor expansion along the first column, we get
\begin{align*}
T_{n}(\lambda)=&(\lambda-3n)\left[(\lambda-2)(\lambda-3)-1\right]^n-(\lambda-5)\left[(\lambda-2)(\lambda-3)-1\right]^{n-1}\\
+&2(-2\lambda+5)\left[(\lambda-2)(\lambda-3)-1\right]^{n-1}+\cdots+\\
&-(\lambda-5)\left[(\lambda-2)(\lambda-3)-1\right]^{n-1}+2(-2\lambda+5)\left[(\lambda-2)(\lambda-3)-1\right]^{n-1}\\
=&(\lambda-3n)\left[(\lambda-2)(\lambda-3)-1\right]^n-n(\lambda-5)\left[(\lambda-2)(\lambda-3)-1\right]^{n-1}\\
&+2n(-2\lambda+5)\left[(\lambda-2)(\lambda-3)-1\right]^{n-1}
\end{align*}
Simplifying this further, we get
$$T_{n}(\lambda)=\displaystyle \lambda \left(\lambda^2-5\lambda+5\right)^{n-1}\left(\lambda^2-(3n+5)\lambda+10n+5\right)$$
Thus, the Laplacian spectra of star semigraphs are:$\left\{0, \frac{5\pm\sqrt{5}}{2}\text{(n-1 times)},\;\lambda_{1}, \lambda_{2}\right\};$ where $\lambda_{1}, \lambda_{2}$ are roots of the quadratic polynomial $$\lambda^2-(3n+5)\lambda+10n+5.$$
\end{proof}
\section{Conclusion}
We could provide evidence that shows that spectral theory for semigraphs generalizes the spectral theory of graphs. The Laplacian matrix is symmetric positive semi-definite which is a very well understood class of matrices, and hence it opens doors to several research problems in spectral semigraph theory. \\

\noindent \textbf{Author Contributions:} I declare that I carried out all work on my own.\\
\textbf{Funding:} No funding.\\
\textbf{Availability of data and material} All data generated or analyzed during this study are included in this published
article.
\subsection*{Declarations:}
There is no conflict of interest.
\bibliographystyle{amsplain}

\begin{thebibliography}{99}
\bibitem{pms1} Pralhad M. Shinde, Adjacency spectra of semigraphs, Discrete Mathematics, Algorithms and Applications, 2023. https://doi.org/10.1142/s1793830923500118.
\bibitem{rbp} Ravindra B Bapat, Graphs and matrices, vol. 27, Springer, 2010.
\bibitem{smt}  Sampathkumar E.; et al, Semigraphs and their applications (2019), Academy of discrete mathematics and applications, India.
\bibitem{mohar} Mohar, Bojan, Y. Alavi, G. Chartrand, and O. R. Oellermann, The Laplacian spectrum of graphs, Graph theory, combinatorics, and applications 2, no. 871-898 (1991): 12.
\bibitem{cmd} YS Gaidhani, CM Deshpande, and BP Athawale, Adjacency matrix of a semigraph, Electronic Notes in Discrete Mathematics 63 (2017), 399–406.



\end{thebibliography}

\end{document}